\newtheorem{theorem}{Theorem}
\newtheorem*{theorem*}{Theorem}
\newtheorem{lemma}{Lemma}
\newtheorem{proposition}{Proposition}
\newtheorem{corollary}{Corollary}
\theoremstyle{definition}
\newtheorem{remark}{Remark}
\newtheorem*{example*}{Example}
\newcommand{\spec}{{\rm spec}}
\newcommand{\sub}{\subseteq}
\newcommand{\xto}{\xrightarrow}
\newcommand{\CC}{\mathbb{C}}
\newcommand{\ZZ}{\mathbb{Z}}
\newcommand{\Der}{{\rm Der}_{\mathbb{C}}}
\begin{document}

\title{Matrix Factorisations Arising From Well-Generated Complex Reflection Groups}



\author{Benjamin Briggs}
\thanks{At the time this work was completed the author was affiliated with the University of Toronto.}

\maketitle

\begin{abstract}
We discuss an interesting duality known to occur for certain complex reflection groups, namely the \emph{duality groups}. Our main construction yields a concrete, representation theoretic realisation of this duality. This allows us to naturally identify invariant vector fields with vector fields on the orbit space, for the action of a duality group. As another application, we construct matrix factorisations of the highest degree basic invariant which give free resolutions of the module of K\"{a}hler differentials of the coinvariant algebra $A$ associated to such a reflection group. From this one can explicitly calculate the dimension of each graded piece of $\Omega_{A/\CC}$ and of $\Der(A,A)$, adding a new formula to the numerology of reflection groups. This applies for instance when $A$ is the cohomology of any complete flag manifold, and hence has geometric consequences.
\end{abstract}

\section{Introduction}

To a complex reflection group $W$ acting on a vector space $V$ one may associate the algebra of coinvariants $A=S/(S^W_+)$, where $S=\CC[V]$. By the classical Chevalley-Shephard-Todd theorem $A$ is a zero dimensional complete intersection, and as such $A$ enjoys good homological properties. Our aim is to investigate how the context at hand might further influence the homological behaviour  of $A$. For instance, when $W$ is well-generated we will be able to precisely calculate the dimensions of the graded components of $\Der(A,A)$.

Let us specialise to the case that $W$ is the Weyl group of a semi-simple algebraic group $G$. If we choose a Borel subgroup $B$ and a maximal torus $T$ in $B$, then $W$ acts on $V={\rm Lie}(T)$ as a reflection group. According to the famous ``Borel picture" the cohomology of the complete flag manifold $X=G/B$ is canonically isomorphic to the corresponding algebra of coinvariants $A\cong {\rm H}^{*}(X;\CC)$. The graded algebra $A$ actually completely determines the rational homotopy type of $X$ (because $X$ is a formal space). For this reason, homological facts about $A$ can be translated into homotopy theoretic information about $X$.
t
Specialising further still, one question which motivated the constructions below is as follows. The symmetric group $W=\mathfrak{S}_{n+1}$ acts on $S=\CC[x_0,...,x_n]$ by permuting the variables. The invariant subring $S^W$ is the polynomial algebra $\CC[\sigma_1,...,\sigma_{n+1}]$ on the elementary symmetric polynomials, so the algebra of coinvariants is $A=S/(\sigma_1,...,\sigma_{n+1})$. The module of K\"{a}hler differentials $\Omega^1_{A/\CC}$ may be presented as the cokernel of the Vandermonde matrix
\[
\begin{pmatrix}
1& x_0 &\cdots & x_0^{n}\\
\vdots & & & \vdots\\
1& x_n &  \cdots& x_n^{n} \\
\end{pmatrix}
: A^{n+1}\to A^{n+1}\to \Omega^1_{A/\CC}\to 0.
\]
How can one continue this to a resolution of $\Omega^1_{A/\CC}$? Experiments performed by Ragnar-Olaf Buchweitz in Macaulay 2 suggested that the resolution might be $2$-periodic.
Remarkably, this is always the case: the Vandermonde matrix is part of a matrix factorisation of $\sigma_{n+1}$ over the algebra $S/(\sigma_1,...,\sigma_{n})$. This curious combinatorial fact does not seem to appear in the literature. Knowing this, one can deduce facts about the space  ${Fl}_n(\CC)$ of complete flags $0=V^0\sub V^1\sub\cdots\sub V^n=\CC^n$ in $\CC^n$.

At the core of these considerations is a duality  discovered by Orlik and Solomon \cite{MR575083} which takes place in the invariant theory of certain complex reflection groups, all Weyl groups included. These are the duality groups, otherwise known as the well-generated reflection groups. We show in theorem \ref{thm1} that this duality is realised by a concrete, representation theoretic pairing.

As a first application we show that duality groups are characterised by the existence of an isomorphism $\Der(S,S)^W\cong \Der(S^W,S^W)$ (which decreases degree by the Coxeter number), see theorem \ref{derthm}.

In section \ref{resolutions} we use the pairing to construct $2$-periodic resolutions of the $A$ modules $\Der(A,A)$ of derivations and $\Omega^1_{A/\CC}$ of K\"{a}hler differentials, which come from a matrix factorisation of the highest degree basic invariant. This was our motivation for the construction of the duality pairing. We use this $2$-periodic resolution to write down explicit formulas for the dimensions of all the graded components of $\Der(A,A)$ and $\Omega^1_{A/\CC}$. These surprisingly attractive formulas appear in theorem \ref{HilbertSeries}.

Finally, in section \ref{flagvarieties} we return to our geometric motivation and deduce some facts about flag manifolds. 

Thank you to the anonymous referee for an extremely detailed and helpful report, which lead to a lot of improvements in the paper (in particular to the two remarks \ref{BessisRemark} and \ref{ShephardRemark} are due to the referee's comments). Thanks also to Vincent G\'elinas for comments on an early draft of this paper.

\section{Preliminaries}\label{prelim}

We begin by reviewing some well-known facts from the invariant theory of complex reflection groups, all of which can be found in \cite{MR2542964} or \cite{MR1249931}. The reader will likely find these references considerably less dense than this section, which largely serves to fix notations and conventions. Almost everything below works for an arbitrary field of characteristic zero, but we will work over $\CC$. 

Let $W$ be a complex reflection group acting irreducibly on a complex vector space $V$ of dimension $n$. Then $W$ acts on the ring $S=\CC[V]={\rm Sym}_{\CC}(V^*)$ of polynomials on $V$. We grade  $S$ in the usual way by placing $V^*$ in degree $1$. 

Let $S^W$ be the algebra of invariants in $S$. The ideal  $I=S\cdot S^W_+$ of $S$ is called the Hilbert ideal, and the quotient $A=S/I=S\otimes_{S^W} \CC$ is called the algebra of coinvariants.

One special feature of the theory of complex reflection groups is that $I$ has a canonical $\CC W$-module complement in $S$, defined as follows. The graded dual $S^*\cong {\rm Sym}_{\CC}(V)$ of $S$ may be thought of as the algebra of differential operators on $S$, with composition as multiplication. 
Of course $W$ acts on $S^*$, and one can define $H$ to be the space of polynomials in $S$ which are killed by every non-constant invariant differential operator in $(S^*)^W_+$. The upshot is that the composition $H\hookrightarrow S\twoheadrightarrow A$ is an isomorphism, see e.g.  \cite[Corollary 9.37]{MR2542964}. 
 Thus $H$ provides  canonical  representatives for elements of $A$ in $S$; it is known as the space of harmonic polynomials. 
 Everything below would work fine just choosing a fixed $W$-invariant complement to $I$ in $S$, we use $H$ only for aesthetic reasons.


The ring of invariants is itself a polynomial ring: the canonical map ${\rm Hom}_W(V,H)\cong (V^*\otimes H)^W\to S$ given by multiplying elements of $V^*$ and $H$ induces an isomorphism
\begin{equation}\label{CST-theorem}
{\rm Sym}_{\CC}({\rm Hom}_W(V,H))\xto{\,\sim\, } S^W.
\end{equation}
This is one half of the classical Chevalley-Shephard-Todd Theorem, which says further that complex reflection groups are characterised by this invariant theoretic property. 
The theorem means we may interpret the quotient $V{\!/\!\!/} W$ as the vector space  ${\rm Hom}_W(V,H)^*$. This is not the usual way of stating the Chevalley-Shephard-Todd Theorem, but it can easily be deduced from it.  An equivalent statement of the theorem is that multiplication $H\otimes S^W\to S$ is an isomorphism of $\CC W$-modules. 


By a theorem of Steinberg \cite{MR0167535}, the critical locus $C$ of the quotient map $V\to V{\!/\!\!/} W$ is precisely the union of the reflecting hyperplanes. The regular locus is by definition $V^{\rm reg}=V\setminus C$, and elements of $V^{\rm reg}$ are called regular vectors. By Steinberg's theorem there is a natural surjection from the so-called braid group $B(W)=\pi_1(V^{\rm reg}{\!/\!\!/} W)$ onto $W$, coming from the short exact sequence of the covering $V^{\rm reg}\to V^{\rm reg}{\!/\!\!/} W$. The image of the critical locus in $V{\!/\!\!/} W$ is called the discriminant locus, and this is cut out by a single reduced polynomial $\Delta$ in $S^W$.

The degrees in which ${\rm Hom}_W(V,H)$ is nonzero (counted with multiplicity) are called the degrees of $W$, denoted by $d_1\leq \cdots \leq d_n$. They are one more than the degrees of the $V$-isotypic components in $H$. The codegrees $d_1^*\geq \cdots \geq d_n^*$ are by definition the degrees in which ${\rm Hom}_W(V^*,H)$ is nonzero, so these are one less than the degrees of the $V^*$-isotypic components in $H$. The highest degree $d_n$ plays an important role, it will be denoted throughout simply by $d$. This is sometimes called the Coxeter number of $W$.

More generally, the degrees of a representation $M$, denoted $d_1^M\leq \cdots \leq d_i^M$, are the degrees in which ${\rm Hom}_W(M,H)$ is nonzero. The codegrees of $M$ are the degrees of $M^*$.

A fixed choice of homogeneous generators $S^W=\CC[f_1,...,f_n] $ for the invariant algebra is called a set of basic invariants. They can be taken as the image of a homogeneous basis of ${\rm Hom}_W(V,H)$ under the canonical map to $S^W$, and therefore we can assume the degree of $f_i$ is $d_i$ (any other choice of basic invariants will have the same degree, up to reordering).

The Chevalley-Shephard-Todd Theorem is usually stated in terms of the existence of a set  of basic invariants, but note that the statement (\ref{CST-theorem}) provides a \emph{canonical} vector space of basic invariants, which come from ``integrating'' harmonic polynomials. For this reason, nothing we do below depends on a  choice of basic invariants.

In any case, it follows that the Hilbert ideal $I$ is generated by a regular sequence of length $n$, namely $I=(f_1,...,f_n)$. In particular, $A=S/I$ is a zero dimensional complete intersection.

We say that $W$ is is a duality group if it happens that $d_i+d_i^*=d$ for all $i$. This is a priori a property of $A$ as a graded representation of $W$ alone, but we will see that the commutative algebra of $A$ is heavily influenced by this condition.

We say that $W$ is well-generated if it can be generated by exactly $n$ complex reflections. Classical convex geometry reveals this to be the case for all real reflection groups. That is, those for which the action of $W$ on $V$ is the complexification of an action of $W$ on a real vector space. A real reflection group is precisely a finite Coxeter group equipped with the complexification of its Tits representation.

The following theorem was first observed by Orlik and Solomon in \cite{MR575083} using a case-by-case check of the Shephard-Todd classification.

\begin{theorem*}
A complex reflection group is a duality group exactly when it is well-generated.
\end{theorem*}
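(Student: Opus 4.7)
The plan is to treat both conditions as manifestations of a common auxiliary object: a \emph{Coxeter element}, by which I mean a $\zeta$-regular element (in the sense of Springer) for $\zeta=e^{2\pi i/d}$ a primitive $d$-th root of unity.

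For \emph{well-generated $\Rightarrow$ duality}, I would invoke Bessis's theorem that every well-generated complex reflection group admits a Coxeter element $c$. Springer's theorem on regular elements computes the spectrum of $c$ on $V$ in terms of the exponents $m_i=d_i-1$, with eigenvalues $\zeta^{m_i}$. The analogous statement for the contragredient representation, read off from the position of the $V^*$-isotypic components of $H$, gives eigenvalues $\zeta^{m_i^*}$ on $V^*$ for co-exponents $m_i^*=d_i^*+1$. Since the eigenvalues on $V^*$ are the inverses of those on $V$, one obtains the multiset equality $\{-m_i\}=\{m_i^*\}$ in $\ZZ/d\ZZ$. The ranges $1\leq m_i\leq d-1$ and $1\leq m_i^*\leq d-1$ (irreducibility excluding $d_i=1$) then force $m_i+m_i^*=d$ after reindexing, i.e.\ $d_i+d_i^*=d$.

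For \emph{duality $\Rightarrow$ well-generated}, I would invoke the Lehrer-Springer criterion that $W$ admits a $\zeta$-regular element iff $|\{i:\zeta^{d_i}=1\}|=|\{i:\zeta^{d_i^*}=1\}|$. For $\zeta$ primitive of order $d$ these counts reduce to $|\{i:d_i=d\}|$ and $|\{i:d_i^*=0\}|$, which duality makes agree (since $d_i=d\iff d_i^*=0$). The resulting Coxeter element has a reflection-length-$n$ factorisation, from which Bessis's companion result extracts $n$ generating reflections for $W$.

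The main obstacle is the depth of the Bessis machinery---the existence of Coxeter elements for well-generated groups and the associated generating reflection system---which does not reduce to elementary combinatorics of the degrees. Without invoking it one is essentially thrown back on the original case-by-case check of Orlik-Solomon across the Shephard-Todd classification. The representation-theoretic pairing constructed in section \ref{pairing} below offers a conceptual interpretation of the duality, though it does not by itself produce the well-generation side of the equivalence.
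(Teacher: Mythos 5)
The paper does not prove this theorem; it cites Orlik--Solomon \cite{MR575083} for the original case-by-case verification across the Shephard--Todd classification, and Bessis \cite{MR1856398} for a classification-free proof of the single implication \emph{duality $\Rightarrow$ well-generated}. The author explicitly remarks that ``there still seems to be no conceptual proof of the converse,'' i.e.\ of \emph{well-generated $\Rightarrow$ duality}. Your proposal, if it worked, would therefore contradict the paper's own assessment of the state of the art, so it deserves careful scrutiny.

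The difficulty is that your argument for \emph{well-generated $\Rightarrow$ duality} is circular. You invoke ``Bessis's theorem that every well-generated complex reflection group admits a Coxeter element,'' but the existence of a $\zeta$-regular element of order $d$ in a well-generated group is not an independent input: in the literature it is obtained by first knowing that well-generated groups satisfy the degree/codegree duality (via Orlik--Solomon's case check) and then applying the Lehrer--Springer regularity criterion exactly as you do in the other direction. There is no classification-free route to the Coxeter element that bypasses duality. You gesture at this by calling it ``the depth of the Bessis machinery,'' but the problem is not depth---it is that the cited input is logically downstream of the conclusion you want. A secondary issue in the same direction: your multiset argument needs $1\leq m_i^*\leq d-1$, but the upper bound $d_i^*\leq d-2$ is itself essentially the duality statement, so even granting the eigenvalue computation, the passage from a congruence mod $d$ to an equality of integers is not justified without what you are trying to prove. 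The \emph{duality $\Rightarrow$ well-generated} half is sound in spirit, though your sketch (a reflection-length-$n$ factorisation of the Coxeter element) matches Bessis's later work more than the cited 2001 argument, which proceeds by intersecting a generic line of direction $f_d$ with the discriminant and showing the punctured line surjects on fundamental groups.
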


In \cite{MR1856398} Bessis proves that duality groups are well-generated without using the classification. Consider a generic line $L$ of direction $f_n$ in $V{\!/\!\!/} W$ (i.e. parallel to $\{f_1=\cdots =f_{n-1}=0\}$). In a duality group the direction of $L$ is uniquely determined. (If we use our convention that the basic invariants come from ${\rm Hom}_W(V,H)$ as in (\ref{CST-theorem}), then in fact $f_n$ is uniquely determined up to a scalar as well.) Bessis observes that lemma \ref{lem1} below, along with regularity of $d$ for duality groups (see \ref{regularcriterion} below), implies that $L$ intersects the discriminant hypersurface transversally exactly $({\rm deg} \Delta)/d=(d_1+\cdots + d_n +d_1^*+\cdots + d_n^*)/d=n$ times. He then shows that after removing these points, the inclusion of the punctured line $L^{\text{reg}}$ into $V^{\text{reg}}{\!/\!\!/} W$ is surjective on fundamental groups, producing $n$ generators for the braid group, and hence for $W$. Despite this pleasing argument, there still seems to be no conceptual proof of the converse. The considerations below may shed light on this.

If $k$ is an integer let us denote by $I_k^W$ the ideal in $S^W$ generated by homogeneous polynomials with degree not divisible by $k$. The corresponding ideal in $S$ is denoted $I_k=S\cdot I^W_k$. In particular $I_0=I$ is the Hilbert ideal. The notation is consistent in that $I_k^W$ consists exactly of the invariant polynomials in $I_k$. Note that $I_k$ is generated by a regular sequence, namely, by those basic invariants it contains. $I^W_k$ is always radical but $I_k$ need not be.

We will make use of Springer's theory of regular elements \cite{MR0354894}. Given an element $g$ of $W$ and a complex number $\zeta$ let us write $V(g,\zeta)$ for the $\zeta$-eigenspace of $V\xto{\, g\, } V$. The element $g$ is called regular when it has an eigenspace  $V(g,\zeta)$ containing a regular vector, and an integer $k$ is a regular number when it is the order of a regular element, or equivalently if some  $V(g,\zeta)$ contains a regular vector with $\zeta$ being a primitive $k$th root of unity. The following is proposition 3.2 of \cite{MR0354894}.

\begin{lemma}\label{lem1} Let $\zeta$ be a primitive $k$th root of unity. Then
\[
\bigcup_gV(g,\zeta)=Z(I_k).
\]
\end{lemma}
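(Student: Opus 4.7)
The plan is to unpack the two inclusions directly, using the Chevalley--Shephard--Todd description $S^W = \CC[f_1,\dots,f_n]$ for a set of basic invariants $f_i$ of degree $d_i$. Since $I_k^W$ is a homogeneous ideal in the polynomial ring $S^W$, it is generated by those $f_i$ with $k\nmid d_i$, and therefore $I_k=S\cdot I_k^W$ is generated in $S$ by the same list. So $Z(I_k)$ is exactly the set of $v\in V$ killed by every basic invariant of degree not divisible by $k$. This reformulation is the bridge between the two sides of the claimed equality.

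For the inclusion $\bigcup_g V(g,\zeta)\subseteq Z(I_k)$, suppose $gv=\zeta v$ and $f\in S^W$ is homogeneous of degree $d$ with $k\nmid d$. Then
\[
f(v)=f(gv)=f(\zeta v)=\zeta^{d} f(v),
\]
and $\zeta^{d}\neq 1$ forces $f(v)=0$. Applying this to each basic invariant $f_i$ with $k\nmid d_i$ shows $v\in Z(I_k)$.

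For the reverse inclusion, fix $v\in Z(I_k)$ and consider the point $\zeta v$. For each basic invariant $f_i$, the scaling behaviour gives $f_i(\zeta v)=\zeta^{d_i}f_i(v)$; if $k\mid d_i$ this equals $f_i(v)$, and if $k\nmid d_i$ then both $f_i(v)$ and $f_i(\zeta v)$ vanish by the previous paragraph. Hence $f_i(\zeta v)=f_i(v)$ for every $i$, and since the $f_i$ generate $S^W$ this says $\zeta v$ and $v$ lie in the same fibre of the quotient map $\pi\colon V\to V{/\!\!/} W$. The Chevalley--Shephard--Todd theorem (recalled in the preliminaries) ensures that the fibres of $\pi$ are precisely the $W$-orbits, so $\zeta v=gv$ for some $g\in W$, i.e.\ $v\in V(g,\zeta)$.

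The argument is essentially forced by the definitions once one has the polynomiality of $S^W$ in hand; the only real point requiring care is the equivalence ``same values on basic invariants $\Leftrightarrow$ same $W$-orbit'', which is immediate from Chevalley--Shephard--Todd. No step looks obstructive, so I anticipate the writeup being short.
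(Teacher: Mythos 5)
The paper does not prove this statement; it cites it directly as Proposition~3.2 of Springer's paper on regular elements, so there is no in-paper argument to compare against. Your proof is correct and is essentially the standard argument (and, as far as I recall, Springer's own): reduce to the observation that $I_k^W$ is generated by those basic invariants $f_i$ with $k\nmid d_i$ (the paper itself notes this), then use homogeneity of the $f_i$ in both directions, together with the fact that $V{/\!\!/}W$ separates $W$-orbits. One small attribution quibble: the separation of orbits --- that two points of $V$ have the same image in $V{/\!\!/}W$ exactly when they lie in the same $W$-orbit --- is not really a consequence of Chevalley--Shephard--Todd. It is a general fact about quotients of affine varieties by finite groups, valid with no reflection hypothesis (e.g.\ because $S$ is integral over $S^W$ and one can average to produce an invariant separating any two distinct orbits). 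Where CST actually enters is earlier, in identifying $I_k^W$ with the ideal generated by $\{f_i : k\nmid d_i\}$: that uses that $S^W$ is a polynomial ring on the $f_i$, so that every monomial of degree not divisible by $k$ must involve some $f_i$ with $k\nmid d_i$. That step is correct in your writeup and is the genuinely reflection-theoretic input.
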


Here $Z(I_k)$ denotes the zero set of $I_k$, or equivalently $\bigcap_{k\nmid d_i} \{ f_i=0\}$. 
As Bessis observes in \cite{MR1856398}, it follows that $k$ is regular if and only if $\Delta$ is not in $I_k^W$.

The other fact about regular vectors we need is the following lemma, which generalises the fact that $A$ carries the regular representation. In fact, it provides a family of  left $W$-module isomorphisms between $A$ and $\CC W$ paramatrised by $V^\text{reg}$.

\begin{lemma}\label{lem2} If $M$ is any representation of $W$ and $v$ is a regular vector then composing with the evaluation $H\hookrightarrow S\xto{{\rm ev}_v} \CC$ at $v$ results in an isomorphism ${\rm Hom}_W(M,H)\xto{\,\sim\,} M^*$ of vector spaces. 
\end{lemma}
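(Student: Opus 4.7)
The plan is to verify that both sides have the same dimension and then prove the map is injective; surjectivity will then follow for free.

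First I would check dimensions. By the Chevalley--Shephard--Todd theorem recalled above, $H$ carries the regular representation of $W$. Hence $\text{Hom}_W(M,H)\cong \text{Hom}_W(M,\CC W)$, which is well-known to have dimension $\dim M=\dim M^*$. (Indeed, if $M=\bigoplus_{\rho}n_\rho\rho$ then $\CC W=\bigoplus_\rho(\dim\rho)\rho$ forces $\dim\text{Hom}_W(M,\CC W)=\sum_\rho n_\rho\dim\rho=\dim M$.)

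Next I would show injectivity. Suppose $\phi\in\text{Hom}_W(M,H)$ satisfies $\phi(m)(v)=0$ for all $m\in M$. For any $g\in W$, $W$-equivariance gives
\[
\phi(m)(g^{-1}v)=(g\cdot\phi(m))(v)=\phi(g\cdot m)(v)=0,
\]
so every polynomial $\phi(m)\in H$ vanishes on the entire orbit $Wv\subseteq V$. It therefore suffices to show that the restriction map $H\to \CC^{Wv}$ is injective. This is precisely the Lagrange interpolation statement recalled in the preliminaries: letting $p\colon S^W\to\CC$ be evaluation at the image of $v$ in $V{\!/\!\!/} W$, the composition $H\hookrightarrow S\twoheadrightarrow S\otimes_{S^W}\CC_p$ is an isomorphism. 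Since $v$ is regular, by Steinberg's theorem its stabiliser in $W$ is trivial, so the orbit $Wv$ has $|W|=\dim H$ distinct points. The finite flat map $V\to V{\!/\!\!/} W$ has degree $|W|$, so by dimension count the scheme-theoretic fibre $\text{Spec}(S\otimes_{S^W}\CC_p)$ must be the reduced set $Wv$, giving $S\otimes_{S^W}\CC_p\cong \CC^{Wv}$. The composition $H\to\CC^{Wv}$ is thus an isomorphism, and in particular evaluation at $v$ separates $H$-classes enough that $\phi(m)$ vanishing on the whole orbit forces $\phi(m)=0$.

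The only possible obstacle is the scheme-theoretic point about the fibre being reduced, but this is immediate from the cited Lagrange interpolation fact combined with Steinberg's theorem (regularity of $v$ is exactly the condition that keeps us off the ramification locus). Combining injectivity with the equality of dimensions established first, the map $\text{Hom}_W(M,H)\to M^*$ is the desired isomorphism.
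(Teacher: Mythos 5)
Your argument is correct. A small clarifying remark: the paper itself gives no proof of this lemma and simply refers to Orlik--Solomon's Lemma~2.6 in \cite{MR575083}, so there is no ``paper's own proof'' to compare with line by line; however, your argument is the standard one and almost certainly mirrors the original. Both ingredients you use are sound: the dimension count follows from the fact that $H$ carries the regular representation, and the injectivity argument correctly combines equivariance (so $\phi(m)$ vanishes on the full orbit $Wv$) with the fact that evaluation on the orbit $H\to\CC^{Wv}$ is an isomorphism. The one place where your wording is slightly misleading is the sentence ``this is immediate from the cited Lagrange interpolation fact combined with Steinberg's theorem'': the Lagrange interpolation isomorphism $H\xrightarrow{\sim} S\otimes_{S^W}\CC_p$ holds for every $p$, regular or not, while the reducedness of the scheme-theoretic fibre is what genuinely uses regularity --- Steinberg gives trivial stabiliser, so the set-theoretic fibre already has $|W|$ points, which matches the degree of the finite flat map and forces the fibre to be reduced. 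You do in fact say this, so the logic is fine; just be careful not to suggest the Lagrange isomorphism itself depends on $v$ being regular.
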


This is lemma 2.6 from \cite{MR575083}. 
Note that this is not surprising when $M=V$, because the quotient $V\to V{\!/\!\!/} W$ is a local isomorphism at a regular vector, and the lemma simply provides an isomorphism between the two cotangent spaces.


\section{The Duality Group Pairing} \label{pairing}

For any representation $M$ of $W$ there is a pairing which lands in $S^W$:
\[
\begin{tikzcd}
                {\rm Hom}_W(M,H)\otimes {\rm Hom}_W(M^*,H) \ar[d] \\
                {\rm Hom}_W(M,S)\otimes {\rm Hom}_W(M^*,S) \ar[d] \\
                {\rm Hom}_W(M\otimes M^*,S) \ar[d]\\
                {\rm Hom}_W(\CC,S) = S^W
\end{tikzcd}
\]
the second arrow using multiplication in $S$ and the third coming from the $W$-invariant diagonal $\CC\to M\otimes M^*$. Note that all of these maps preserve the gradings. From $S^W$ we can evaluate at a vector $v$ to land in $\CC$. The following diagram then commutes
\[
     \begin{tikzcd}
                {\rm Hom}_W(M,H)\otimes {\rm Hom}_W(M^*,H)       \ar[d] \ar[r]     & S^W  \ar[d, "{\rm ev}_v"]   \\
                M^*\otimes M   \ar[r] & \CC
     \end{tikzcd}
\]
where the leftmost map comes from applying lemma \ref{lem2} to both $M$ and $M^*$. If $v$ is regular lemma \ref{lem2} says this is an isomorphism. In this case going around the lower-left  clearly results in a non-degenerate pairing, so we obtain:

\begin{proposition}\label{perfect} When $v$ is a regular vector the pairing
\[
{\rm Hom}_W(M,H)\otimes {\rm Hom}_W(M^*,H)\to S^W \xto{\ \text{ev}_v \ } \CC
\]
is perfect.
\end{proposition}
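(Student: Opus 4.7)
The plan is to verify the commutative square displayed immediately above the proposition and then read off perfectness from Lemma~\ref{lem2} together with the perfectness of the canonical evaluation pairing $M^*\otimes M\to \CC$. The bottom arrow $M^*\otimes M\to \CC$ is the standard evaluation pairing $(f,m)\mapsto f(m)$, which is always non-degenerate. At a regular $v$, Lemma~\ref{lem2} applied to each of $M$ and $M^*$ tells us that the left vertical map is an isomorphism of finite-dimensional vector spaces. Commutativity of the square then forces the top-right composition to be perfect.

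So the only content is to check that the square commutes. Fix dual bases $\{e_i\}\subset M$ and $\{e_i^*\}\subset M^*$, so the $W$-invariant diagonal $\CC\to M\otimes M^*$ is $1\mapsto \sum_i e_i\otimes e_i^*$. Given $\phi\otimes\psi\in {\rm Hom}_W(M,H)\otimes {\rm Hom}_W(M^*,H)$, the top-right route produces
\[
\sum_i \phi(e_i)\cdot \psi(e_i^*)\in S^W,
\]
and evaluation at $v$ yields $\sum_i \phi(e_i)(v)\,\psi(e_i^*)(v)$. For the other route, Lemma~\ref{lem2} sends $\phi$ to $\tilde\phi={\rm ev}_v\circ\phi\in M^*$ and $\psi$ to the element $\tilde\psi\in (M^*)^*=M$ defined by $\tilde\psi(m^*)=\psi(m^*)(v)$; the canonical pairing then produces $\tilde\phi(\tilde\psi)=\sum_i \psi(e_i^*)(v)\,\phi(e_i)(v)$, matching the top-right computation.

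I do not anticipate any real obstacle here: the essential representation-theoretic input is entirely contained in Lemma~\ref{lem2}, and the rest is bookkeeping in dual bases. If anything, the subtle point is simply keeping track of the identification $(M^*)^*=M$ and of how the $W$-invariant diagonal interacts with the pairing, but no technical machinery beyond what appears in Section~\ref{prelim} is required.
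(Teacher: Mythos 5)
Your proposal is correct and follows exactly the route the paper takes: the paper displays the same commutative square just before the proposition, identifies the left vertical arrow as the isomorphism from Lemma~\ref{lem2} applied to $M$ and $M^*$, and concludes perfectness from the bottom evaluation pairing. The only difference is that you spell out the dual-basis computation verifying commutativity, which the paper asserts without detail; that bookkeeping is accurate, including the identification $(M^*)^*=M$.
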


Of course this depends only on the image of $v$ in $V{\!/\!\!/} W$. Next we think about how to preserve some of the grading information so that we can relate the degrees and codegrees of $M$.

Lemma \ref{lem2} can be restated algebraically as saying that if $k$ is a regular number we can find a regular vector $v$ for which evaluation at $v$ factors through the quotient:
\[
S^W \xto{\ \ \ } S^W/I_k \xto{\ \text{ev}_v \ } \CC.
\]
If one considers $S^W$ and $\CC$ to be graded by $\ZZ/k\ZZ$, with $\CC$ in degree $0$, then another way of stating this is that evaluation $S^W\to \CC$ at $v$ preserves this grading. Combining these remarks with proposition 1 gives the following:

Suppose $k$ is a regular number for $W$. For any representation $M$ of $W$ there is a permutation $\pi$ of $\{1,...,{\rm dim}\,M \}$ such that for each $i$
\[
d_i^M+d_{\pi i}^{M^*} =0 \quad {\rm modulo}\  k.
\]
\subsection{}\label{regularcriterion} This was observed by Orlik and Solomon by comparing the eigenvalues of the $W$ action on $M$ and on $M^*$ \cite[(5.1)]{MR575083}. There is a converse to this result: if such a permutation exists for $M=V$ then $k$ is a regular number. Even stronger, it is proven in \cite{MR1756877} that $k$ is regular as long as it divides exactly as many degrees as codegrees. The same fact is given a case free proof in \cite{MR1990017}.

Now we turn to the case of duality groups. This is the best situation for controlling the grading. We are going to start using the notation $M(n)$ for the $n$-fold shift of a graded vector space $M$, with $M(n)_i=M_{i-n}$.

After pairing ${\rm Hom}_W(V,H)\otimes {\rm Hom}_W(V^*,H)\to S^W$ we can canonically project onto a highest degree basic invariant
\[
S^W = {\rm Sym}_{\CC}{\rm Hom}_W(V,H) \twoheadrightarrow  {\rm Hom}_W(V,H)_d.
\]
Since $A$ and $H$ are canonically isomorphic, we switch freely between them below. 

\begin{theorem}\label{thm1} When $W$ is a duality group ${\rm Hom}_W(V,A)_d$ is one dimensional and the pairing
\[
{\rm Hom}_W(V,A)\otimes {\rm Hom}_W(V^*,A) \to {\rm Hom}_W(V,A)_d
\]
is perfect.
\end{theorem}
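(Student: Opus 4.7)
The $1$-dimensionality of ${\rm Hom}_W(V,A)_d$ is essentially a restatement of the existence of an essentially unique highest degree basic invariant $f_d$, already taken as a fact about duality groups in Section \ref{prelim}. For perfectness, my plan is to reduce to Proposition \ref{perfect} by evaluating the $S^W$-valued pairing at a carefully chosen regular vector, and letting the duality constraint $d_i+d_i^*=d$ do the rest as graded bookkeeping.

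The key preliminary step is the choice of $v$. First, $d$ is a regular number for $W$: it divides exactly the degree $d_n=d$ and exactly the codegree $d_n^*=0$ (each with multiplicity $1$, by the uniqueness of $f_d$ together with the duality identity $d_i^*=d-d_i$), so the criterion recalled in \ref{regularcriterion} applies. Pick $v\in V(g,\zeta)$ for a regular element $g$ of order $d$ and $\zeta$ a primitive $d$th root of unity. By Lemma \ref{lem1}, every basic invariant of degree less than $d$ vanishes at $v$, and $f_d(v)\neq 0$ because $v$ is regular (in particular $v\neq 0$, so not all basic invariants can vanish at $v$). An arbitrary element of $S^W_d$ has the form $cf_d+q$ with $q$ a polynomial in the basic invariants of degree $<d$, so $q(v)=0$, and evaluation at $v$ on $S^W_d$ factors as the canonical projection $\pi\colon S^W_d\twoheadrightarrow {\rm Hom}_W(V,A)_d$ onto the indecomposables, followed by an isomorphism ${\rm ev}_v\colon {\rm Hom}_W(V,A)_d\xto{\sim}\CC$.

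Write $U={\rm Hom}_W(V,A)$ and $U^*={\rm Hom}_W(V^*,A)$, and let $P\colon U\otimes U^*\to S^W$ denote the pairing. By grading it splits into pieces $U_a\otimes U^*_b\to S^W_{a+b}$. By Proposition \ref{perfect}, the composite ${\rm ev}_v\circ P$ is a perfect pairing of $U$ with $U^*$. In the duality setting $2\leq a+b\leq d+(d-d_1)<2d$, so $d\mid(a+b)$ forces $a+b=d$; hence ${\rm ev}_v\circ P$ is block diagonal, and perfectness of the total pairing forces each block $U_a\otimes U^*_{d-a}\to\CC$ to be perfect. By the factorisation of the previous paragraph, each such block is equal to the desired pairing $U_a\otimes U^*_{d-a}\to U_d$ composed with the isomorphism ${\rm ev}_v\colon U_d\xto{\sim}\CC$, so the stated pairing is perfect on each block; it vanishes on the remaining pieces $U_a\otimes U^*_b$ with $b\neq d-a$ by degree, so summing yields total perfectness. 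The only step with any content is the factorisation of ${\rm ev}_v$ through $\pi$ on $S^W_d$; the rest is graded bookkeeping around Proposition \ref{perfect}.
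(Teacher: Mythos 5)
Your proposal is correct and takes essentially the same route as the paper: identify $d$ as a regular number, choose a regular $v \in Z(I_d)$ so that ${\rm ev}_v$ kills the lower basic invariants, and deduce perfectness of the degree-$d$ projected pairing from the perfectness of ${\rm ev}_v \circ P$ (Proposition \ref{perfect}) plus the degree constraint $d_i + d_j^* < 2d$. The only stylistic difference is that the paper argues element-wise (a pair with $f(v)\neq 0$ must project nontrivially) and gives a one-line direct proof of the $1$-dimensionality via ${\rm Hom}_W(V^*,H)_0\cong\CC$, whereas you organize the same degree bookkeeping as a block decomposition and cite the uniqueness of $f_d$ from the preliminaries.
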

Since ${\rm Hom}_W(V,A)_d$ is concentrated in degree $d$ this pairing is a concrete, representation-theoretic witness to the duality $d_i+d^*_i=d$. 

By transposing the pairing, there is a natural map $D: {\rm Hom}_W(V^*,A)\to {\rm Hom}_W(V,A)^*(d)$ which is an isomorphism if and only if $W$ is a duality group. This isomorphism will be very useful to us below.

\begin{proof}

Firstly, ${\rm Hom}_W(V,H)_d$ is one dimensional because ${\rm Hom}_W(V^*,H)_{0}$ clearly is. This implies that $d\nmid d_i$ precisely for $i\neq n$. In other words, the zero-set $Z(I_d)$ of lemma \ref{lem1} is $\{f_1=...=f_{n-1}=0\}$. The regularity criterion mentioned in paragraph \ref{regularcriterion} implies that the highest degree is always regular in a duality group. So we may invoke lemma \ref{lem1} to find a regular vector $v$ in $Z(I_d)$.

The point will be that the triangle below almost commutes, enough to make the pairing along the top perfect.
\[
\begin{tikzcd}[row sep=3mm]
     & & {\rm Hom}_W(V,H)_d \ar[dd, "\wr"] \\
			{\rm Hom}_W(V,H)\otimes {\rm Hom}_W(V^*,H)  \ar[r]   & S^W  \ar[ur] \ar[dr, "{\rm ev}_v"]  & \\
			& & \CC
\end{tikzcd}
\]
The vertical map is any choice of isomorphism, that is, a choice of highest degree basic invariant $f_n$ (there is a canonical one, with $f_n(v)=1$).
 
The lower pairing is perfect by proposition \ref{perfect}. Suppose $\phi\in {\rm Hom}_W(V,H)$ and $\psi\in  {\rm Hom}_W(V^*,H)$ pair nontrivially in this way. Say $\phi\otimes \psi$ goes to $f\in S^W$ and $f(v)\neq 0$. Since $f_1(v)=...=f_{n-1}(v)=0$ this $f$ involves a monomial $f_n^k$. For degrees reasons, using the fact that $d_i+d^*_j<d_i+d_i^*+d_j+d^*_j=2d$ for any $i,j$, we must have $k=1$. In other words $f$ projects nontrivially into ${\rm Hom}_W(V,H)_d$, which is enough to establish the theorem.
\end{proof}

In the proof we only really used that $d_1^*<d$, as we can use the result of \cite{MR1990017} mentioned in \ref{regularcriterion} to get regularity of $d$ from this (but regularity of $d$ alone is not sufficient for the proof). So in fact $W$ is a duality group precisely when $d_1^*<d$. This inequality is clear for real reflection groups since $d^*_1=d-2$ (one can also establish directly that $d$ is a regular number in the real case using the theory of Coxeter elements).

Once we have one regular vector in $Z(I_d)$, any nonzero vector in here will be regular. The pairing defined in terms of ${\rm ev}_v$ depends only on the orbit of $v$, and choosing an orbit corresponds to choosing a generator of ${\rm Hom}_W(V,H)_d$. Put geometrically, $\{f_1=...=f_{n-1}=0\}$ is the preimage in $V$ of the $f_n$-axis in $V\!/\!\!/W$; by regularity it consists of $|G|/d$ lines through the origin. The coordinate ring of this set is $S/I_d$ (the reducedness of $S/I_d$ is equivalent to the regularity of $d$, since $\spec (S/I_d)$ always has degree $|G|/d$ in $V$).


\section{K\"ahler Differentials and Derivations}\label{DerSection}

The reader can consult \cite[Chapter 16]{MR1322960} for (much) more detail on differentials and derivations. In short, the module of K\"ahler differentials (or $1$-forms) $\Omega^1_{R/\CC}$ of a $\CC$ algebra $R$ is the recipient of the universal $\CC$-linear derivation $d:R\to \Omega^1_{R/\CC}$. This universal property implies that the dual module ${\rm Hom}_R(\Omega^1_{R/\CC},R)$ is canonically isomorphic to the module of derivations $\Der(R,R)$. 

In our context, the inclusion $S^W\to S$ gives rise to a Jacobian map $J:\Der(S,S)\to \Der(S^W,S)$ by restriction. Choosing bases and writing $J$ as a matrix recovers the usual Jacobian matrix.

We can also consider the (free) $S^W$-module $\Der(S,S)^W$ of derivations which are invariant for the natural $W$ action. 
Extending scalars we get a natural map $M:\Der(S,S)^W\otimes_{S^{W}}S\to \Der(S,S)$. After choosing basis here, $M$ corresponds to the so-called ``coefficient matrix'' of a set of basic derivations, see \cite[definitions 4.11 and 6.67]{MR1217488}.

It is always the case that $(\Omega^1_{S/\CC})^W\cong \Omega^1_{S^W/\CC}$. Our first application of the duality pairing is that the corresponding fact for derivations characterises duality groups.

\begin{theorem}\label{derthm}
There is a natural map
\[
D\otimes S^W: \Der(S,S)^W\longrightarrow\Der(S^W,S^W)(d)
\]
which is an isomorphism if and only if $W$ is a duality group.
\end{theorem}

In other words: for duality groups there is a canonical degree $d$ isomorphism between the $W$-invariant derivations on $S$ and derivations on $S^W$. This means that the duality pairing can be rephrased as an $S^W$-bilinear pairing between $\Der(S,S)^W$ and $\Omega^1_{S^W/\CC}$, that is, between invariant vector fields on $V$ and $1$-forms on $V/\!\!/W$. 
This may turn out to be the correct perspective.

It's not difficult to see that under the duality condition $\Der(S^W,S^W)(d)$ and $\Der(S,S)^W$ are both free $S^W$ modules generated in the same degree, so it seems likely that this has been observed before. In any case, the canonical isomorphism constructed here will be very useful to us.

\begin{proof}
We just make the identifications
\[
\Der(S,S)^W \cong (V\otimes S)^W\cong (V\otimes H)^W\otimes S^W,
\]
as well as
\[
\Der(S^W,S^W) \cong ((V^*\otimes H)^W)^*\otimes S^W.
\]
After which the map in the theorem is defined by commutativity of the following diagram
\[
\begin{tikzcd}[column sep=20mm]
	\Der(S,S)^W\ar[r] \ar[d, equal, "\wr"']&  \Der(S^W,S^W) (d) \ar[d, equal, "\wr"]\\
	  (V\otimes H)^W\otimes S^W\ar[r,"D\otimes S^W"] & ((V^*\otimes H)^W)^*\otimes S^W(d) 
\end{tikzcd}
\]
where $D$ is the map defined just below theorem \ref{thm1}. According to that theorem, $D$ is an isomorphism precisely when $W$ is a duality group.
\end{proof}

We are ready to construct an important map which goes in the reverse direction to the Jacobian map. Assume that $W$ is a duality group. Extending scalars produces an isomorphism $D\otimes S: \Der(S,S)^W\otimes_{S^W}S \cong\Der(S^W,S)(d)$, and we define
\[
K=M (D\otimes S)^{-1} :\Der(S^W,S)(d) \longrightarrow \Der(S,S)  
\]
Note that everything above is $W$-equivariant. In particular, $JK$ is an equivariant endomorphism of $\Der(S^W,S)$, and so when written as a matrix the entries of $JK$ are invariant. Thus ${\rm det}(JK)$ is an invariant polynomial which is divisible by ${\rm det}(J)$, which therefore has to be divisible by $\Delta$. Since it has the same degree as $\Delta$, they coincide up to a scalar. Moreover, it follows that ${\rm det}(K)$ is the reduced equation for the critical locus (with $M$ the same argument applies to any reflection group, and this a particular case of Gutkin's theorem \cite{MR0314956}).

\section{Matrix Factorisations}

This section contains an extremely brief introduction to the theory of matrix factorisations. A readable account of the basics   can be found in \cite[Section 3]{MR2343380}.

Let $R$ be a commutative ring and $f$ be an element of $R$.  
A matrix factorisation of $f$ consists of a pair of projective $R$-modules $F_0$ and $F_1$ with a pair of $R$-linear maps $\phi:F_0\to F_1$ and $\psi:F_1\to F_0$ for which the compositions $\psi\phi$ and $\phi\psi$ are both multiplication by $f$. When $R$ is graded and $f$ is homogeneous we assume further that $\phi$ and $\psi$ are homogeneous of degree zero and $|f|$ respectively.

Matrix factorisations were introduced by Eisenbud \cite{MR570778} to investigate the stable behaviour of modules over a hypersurface singularity. It is in this case, when $R$ is regular, that these objects are most useful: then the homotopy category of matrix factorisations carries important geometric meaning. However, we will construct matrix factorisations over the singular ring $S/I_d$ (over a general complete intersection, matrix factorisations control the stable behaviour of modules of complexity one).

Crucially, when $f$ is a non-zero-divisor the $2$-periodic sequence
\[
\cdots \to F_0\otimes_R(R/f) \xto{\phi\otimes 1}F_1\otimes_R(R/f) \xto{\psi\otimes 1}F_0\otimes_R(R/f) \to \cdots
\]
obtained by reducing modulo $f$ is an exact sequence of free $R/f$-modules. If $f$ is homogeneous of some degree the sequence is only quasi-periodic: shifting the complex by $2$ results in a grading shift by $|f|$. Let it be emphasised that matrix factorisations do not simply provide a criterion for exactness, they are extremely important objects in their own right.

\subsection{}\label{MFcriterion} 
Assume again that $f$ is a non-zero-divisor. Let $F_0$ and $F_1$ be free $R$-modules of equal finite rank, and assume that $\phi:F_0\to F_1$ and $\psi:F_1\to F_0$ are linear maps such that $\psi\phi=f\cdot 1_{F_1}$. As Eisenbud notes in  \cite{MR570778}, it is then automatically true that $\phi\psi=f\cdot 1_{F_0}$, so this data determines a matrix factorisation.

\section{Resolving $\Der(A,A)$ and $\Omega^1_{A/\CC}$} 
\label{resolutions}
The results of this section 
were the motivation for the construction of the duality group pairing. 
Associated to the algebra map $S^W\to S$ there is a well-known presentation for the  module of  
\[
\Omega^1_{S^W/\CC} \otimes_{S^W} A\to\Omega^1_{S/\CC}\otimes_S A\to \Omega^1_{A/\CC}\to0.
\]
How can this be continued to a free resolution?

We prefer to phrase things dually in terms of derivations. That is, we'll continue the exact sequence
\[
0\to \Der(A,A)\to \Der(S,A)\to \Der(S^W,A)
\]
periodically to the right, using a matrix factorisation. The desired resolution of the K\"ahler differentials can be recovered by applying ${\rm Hom}_A(-,A)$.

Essentially, $J$ and $K$ form the matrix factorisation we are after. However, differentiating introduces constants which makes this not quite true. To fix this we introduce the reduced Jacobian $\overline{J}: \Der(S,S)\to \Der(S^W,S)$ which, considered as a map $V\otimes S\to {\rm Hom}_W(V,H)^*\otimes S$, is given by $\overline{J}(v\otimes f)=\sum \sigma^*\otimes \sigma(v)f$, where the sum is over a basis $\{\sigma\}$ of ${\rm Hom}_W(V,H)^*$.

Let us first explain why, modulo the ideal $I_d$, $\overline{J}$ agrees with $J$ up to post-composition with an invertible matrix of scalars.

More precisely, if $\sigma \in {\rm Hom}_W(V,H)$ and $\tilde{\sigma} = \sum u^*\sigma(u)$ is the corresponding polynomial in $S^W$, then we will show that $ \frac{\partial  \tilde{\sigma}}{\partial v} = |\sigma| \sigma(v)$ modulo $I_d$, so this matrix of scalars is multiplication by degree on ${\rm Hom}_W(V,H)^*$. For obvious degree reasons, it suffices to do this modulo the full Hilbert ideal $I$. Since $S=H\oplus I$ as $W$ representations, we can write $\frac{\partial  \tilde{\sigma}}{\partial v} = h_\sigma(v)+i_\sigma(v)$ for equivariant maps $h_\sigma :V\to H$ and $i_\sigma :V\to I$. Since $\sum v^*\frac{\partial  \tilde{\sigma}}{\partial v} = |\sigma|\tilde{\sigma}$ the map $k_\sigma=|\sigma|h_\sigma -\sigma:V\to H$ satisfies $\sum v^* k_\sigma(v) = |\sigma |\sum v^*i_\sigma(v)$. But, if we apply the Reynolds operator to the right-hand-side here, we see that this is a decomposable invariant polynomial, even though the Chevalley-Sheppard-Todd theorem says the map ${\rm Hom}_W(V,H)\to S^W$ is injective and has only indecomposables in its image. Hence in fact $k_\sigma =0$.

In particular, $\overline{J}\otimes_S A$ has the same kernel as $J\otimes_S A$, and any matrix factorisation involving $\overline{J}\otimes_S S/I_d$ is isomorphic to one involving $J\otimes_S S/I_d$.

\begin{theorem}\label{MF} Let $W$ be a duality group equipped with a choice of highest degree basic invariant $f_n$. The maps $\overline{J}$ and $K$ defined above
\[
    \begin{tikzcd}
                \Der(S,S)  \ar[r, shift left=3pt] &   \Der(S^W,S) \ar[l, shift left=3pt]
    \end{tikzcd}
\]
result in a matrix factorisation of $f_n$ after reducing modulo $f_1,...,f_{n-1}$. Hence applying  $-\otimes_S A$ we obtain a $2$-periodic minimal free resolution of $\Der(A,A)$. Dualizing, we also obtain a $2$-periodic minimal free resolution of $\Omega^1_{A/\CC}$.
\end{theorem}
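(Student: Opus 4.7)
The plan is to verify directly that $\overline{J}\circ K$ reduces to multiplication by $f_n$ modulo $I_d=(f_1,\ldots,f_{n-1})$, and then invoke the one-sided criterion of \S\ref{MFcriterion} to upgrade this to a full matrix factorisation. The 2-periodic resolutions of $\Der(A,A)$ and $\Omega^1_{A/\CC}$ will then drop out of standard matrix-factorisation machinery.

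For the main computation I would fix a homogeneous basis $\{\sigma\}$ of ${\rm Hom}_W(V,H)$ with dual basis $\{\sigma^*\}$ of ${\rm Hom}_W(V,H)^*$. Under the isomorphism of Theorem \ref{thm1}, each $\sigma^*$ corresponds to an element $\tau_\sigma \in {\rm Hom}_W(V^*,H)=(V\otimes H)^W$ characterised by the requirement that the $S^W$-valued pairing $\langle\rho,\tau_\sigma\rangle$ projects to $\delta_{\rho,\sigma}f_n$ in ${\rm Hom}_W(V,H)_d$, i.e.\ $\langle\rho,\tau_\sigma\rangle\equiv\delta_{\rho,\sigma}f_n \pmod{I_d^W}$. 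Writing $\tau_\sigma=\sum_i v_i\otimes h_i$, the definition of $K$ gives $K(\sigma^*\otimes g)=\sum_i v_i\otimes h_ig$, and applying the formula $\overline{J}(v\otimes f)=\sum_\rho\rho^*\otimes\rho(v)f$ yields
\[
\overline{J}(K(\sigma^*\otimes g))=\sum_\rho\rho^*\otimes\Big(\sum_i\rho(v_i)h_i\Big)g.
\]
The inner sum is by construction exactly the $S^W$-valued pairing $\langle\rho,\tau_\sigma\rangle$ of \S\ref{pairing}, and modulo $I_d$ this collapses to $\delta_{\rho,\sigma}f_n$. Thus $\overline{J}K\equiv f_n\cdot\mathrm{id}\pmod{I_d}$.

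Once this holds, \S\ref{MFcriterion} applies directly: $f_n$ is a non-zero-divisor on $S/I_d$ since $f_1,\ldots,f_n$ form a regular sequence, and $\Der(S,S)$ and $\Der(S^W,S)$ are free of equal rank $n$, so the opposite composition $K\overline{J}\equiv f_n\cdot\mathrm{id}\pmod{I_d}$ holds automatically and we obtain a matrix factorisation of $f_n$ over $S/I_d$. Reducing further modulo $f_n$ produces a 2-periodic exact complex of free $A$-modules; the kernel of $\overline{J}\otimes A$ is $\Der(A,A)$ by the presentation discussed at the start of this section (together with the fact that $\overline{J}$ and $J$ agree up to an invertible matrix of scalars modulo $I_d$), so splicing the 2-periodic complex gives the desired resolution. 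Dualising gives the resolution of $\Omega^1_{A/\CC}$, which is the cokernel of the transpose map $\overline{J}^{T}\otimes A$. Minimality is immediate because all entries of $\overline{J}$ and $K$ are homogeneous of positive degree, hence vanish in $\CC$ after reducing modulo the irrelevant ideal.

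The main obstacle is the bookkeeping in the first step: one must recognise that $\overline{J}\circ K$ literally reassembles the $S^W$-valued pairing of \S\ref{pairing} entry by entry, and that the quotient map $S^W\twoheadrightarrow{\rm Hom}_W(V,H)_d$ used to define the isomorphism in Theorem \ref{thm1} coincides with reduction modulo $I_d^W$ in degree $d$. Once that comparison is made, the rest of the argument is formal.
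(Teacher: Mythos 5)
Your proof is correct and follows essentially the same route as the paper. You fix a basis and dual basis of ${\rm Hom}_W(V,H)$ and introduce $\tau_\sigma$ as the element corresponding to $\sigma^*$ under the duality of Theorem \ref{thm1}, whereas the paper works with an arbitrary $\partial\in {\rm Hom}_W(V,H)^*$ and its corresponding $\phi\in {\rm Hom}_W(V^*,H)$; the two formulations are equivalent, and in both cases the core step is the observation that $\overline{J}\circ K$ entry-by-entry reassembles the $S^W$-valued pairing of section \ref{pairing}, which reduces to $\delta_{\rho,\sigma}f_n$ modulo $I_d$ by the defining property of that correspondence. Your explicit remark that the projection $S^W\twoheadrightarrow {\rm Hom}_W(V,H)_d$ agrees with reduction modulo $I_d^W$ in degree $d$ is a point the paper leaves implicit, and it is worth spelling out; your observation that minimality follows from the positivity of the degrees of all matrix entries is likewise correct (using $d_i-1\geq 1$ and $d_i^*+1\geq 1$), where the paper merely asserts minimality.
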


The appearance of these matrix factorisations is quite surprising. There is no obvious reason that the $A$-module $\Omega^1_{A/\CC}$ should have complexity one (i.e. a resolution by free modules of bounded rank),
this is far from the generic situation.

As remarked above, up to a scalar ${\rm det}(JK)$ is $\Delta$, so we recover the fact that when $W$ is a duality group, $\Delta$ is monic in $f_n$ of degree $n$.

\begin{proof}
A direct calculation verifies that $\overline{J}K=f_n\cdot 1_{\Der(S^W,S/I_d)}$, it goes as follows. Fix an element $\partial$ of ${\rm Hom}_W(V,H)^*$, considered as a derivation in $\Der(S^W,S)$. The corresponding element $\phi$ of ${\rm Hom}_W(V^*,H)$ by definition satisfies $\sum_v \phi(v^*)\sigma(v)=\partial(\sigma)f_n$ after reducing to $S/I_d$, where the sum is taken over some basis $\{v\}$ of $V$, and $\sigma$ is any element of ${\rm Hom}_W(V,H)$.  The map defined above takes $\phi$ to $K(\partial)= \sum v\otimes  \phi(v^*)$ now considered as an element of $V\otimes S\cong \Der(S,S)$. The reduced Jacobian then takes $ \sum v\otimes  \phi(v^*)$ to
\[
\overline{J}K(\partial) =\sum_\sigma \sum_v  \sigma^* \otimes  \sigma(v)\phi(v^*) = \sum_\sigma \sum_v  \sigma^* \otimes  \partial(\sigma)f_n,
\]
summing over some basis $\{\sigma\}$ of ${\rm Hom}_W(V,H)$. 
From here we see $\overline{J}K(\partial)(\sigma)=\partial(\sigma)f_n$ holds in $S/I_d$. After this, the observation of paragraph \ref{MFcriterion} completes the proof that $\overline{J}$ and $K$ determine a matrix factorisation of $f_n$.

The remaining assertions follow from the presentation of $\Der(A,A)$ as the kernel of $J:\Der(S,A)\to \Der(S^W,A)$, and of $\Omega^1_{A/\CC}$ as the cokernel of $J^*:\Omega^1_{S^W/\CC}\otimes_{S^W}A\to \Omega^1_{S/\CC}\otimes_S A$.
\end{proof}

\begin{remark}\label{BessisRemark}
As pointed out by the referee, this bears a close resemblance to Bessis' decomposition of the discriminant matrix in \cite[theorem 2.4 (iv)]{MR3296817}. The discriminant matrix of \cite[definition 6.67]{MR1217488} is the matrix $M_\Delta$ over $S^W$ obtained from $JM: \Der(S,S)^W \to \Der(S^W,S^W)$ by choosing bases. Bessis shows that $M_\Delta = M_0+ f_nM_1$, where $M_0$ has entries in $\CC[f_0,...,f_{n-1}]$ and $M_1$ is lower-triangular with non-zero scalars along the diagonal.

Assuming theorem \ref{derthm}, Bessis' decomposition follows from the the existence of the matrix factorisation above. Conversely, one can almost use theorem \ref{derthm} to reconstruct the matrix factorisation from Bessis' decomposition, except for the possibility of scalars in $M_0$.

Since Bessis' decomposition has been used in \cite[lemma 3.8 and remark 3.2]{2015arXiv151101608K} to construct flat coordinates on the orbit space $V{\!/\!\!/} W$, it would be very interesting to investigate the connection between the construction in this paper and those of loc.~cit.
\end{remark}


\begin{example*} Already interesting is the case of the symmetric group $\mathfrak{S}_{n+1}$ acting on  $V=\left\{(\alpha_0,...,\alpha_n) : {\scriptstyle\sum} \alpha_i=0\right\}$. Then $S=\CC[x_0,...,x_n]/({\scriptstyle \sum} x_i=0)$ where $x_i$ is dual to $(0,...,1,...,0)$. It is classical that $S^{\mathfrak{S}_{n+1}}$ is the polynomial ring $\CC[\sigma_2,...,\sigma_{n+1}]$ on the elementary symmetric polynomials $\sigma_k=(-1)^{k-1}\sum_{i_1<...<i_k}x_{i_1}\cdots x_{i_k}$. So the algebra of coinvariants is $A=S/(\sigma_2,...,\sigma_{n+1})$, and the highest degree is $d=n+1$. The duality pairing will naturally match $\sigma_{i+2}$ with $\sigma_{n+1-i}$. We need to choose bases to get matrices. If $v=\frac{1}{n+1}(1,...,1)$ and $v_i=(0,...,1,...,0)-v$ then when we differentiate we get $\frac{\partial \sigma_k}{\partial{v_i}}= x_i^{k-1}$ modulo $\sigma_2,...,\sigma_{n}$. So we take as bases $v_1,...,v_n$ for $V$ and $\sigma_2,...,\sigma_{n+1}$ for ${\rm Hom}_{\mathfrak{S}_{n+1}}(V,H)$, and we get
\[J=
\begin{pmatrix}
x_1 &\cdots & x_n\\
\vdots & & \vdots\\
x_1^n &  \cdots& x_n^n \\
\end{pmatrix}
\quad \text{ and }\quad K=
\begin{pmatrix}
x_1^n -x_0^n &\cdots & x_1- x_0\\
\vdots & &  \vdots\\
x_{n}^n -x_0^n &  \cdots& x_n-x_0 \\
\end{pmatrix}
\]
as matrices with entries in $S/I_d$ (calculating $K$ is made easier by the observation made in Corollary \ref{coxeterresolution}). Even knowing these matrices, it is combinatorially quite involved to check directly that they form a matrix factorisation of $\sigma_{n+1}$. 
\end{example*}

Going back to an arbitrary duality group, we can read from the matrix factorisation of theorem \ref{MF} the precise dimensions of all the graded components of $\Omega^1_{A/\CC}$ and $\Der(A,A)$.

First we need a few facts about duality from commutative algebra, which are properly explained in \cite[Chapter 21]{MR1322960}, for example.

If $M$ is graded vector space then its Hilbert series is by definition $H_M(t)=\sum_i (\dim M_i)t^i$. We use the facts that $H_{M(n)}(t)=t^nH_M(t)$, and $H_{M^*}(t)=H_M(t^{-1})$, and $H_{-}(t)$ is ``additive on exact sequences'' in the obvious sense.

Since $A$ is a finite-dimensional complete intersection ring it is a Frobenius algebra: there is a canonical bimodule isomorphism $A^*(N)\cong A$, where $N$ is the socle degree of $A$. In the context of reflection groups $N=(d_1-1)+\cdots+(d_n-1)$ is the number of nontrivial reflections in $W$.  It follows that for every module $M$ there is a natural isomorphism $M^*(N)\cong {\rm Hom}_A(M,A)$. In particular $(\Omega^1_{A/\CC})^*(N)\cong \Der(A,A)$, and so $H_{\Der(A,A)}(t)= t^{N} H_{\Omega^1_{A/\CC}}(t^{-1})$.

\begin{theorem}\label{HilbertSeries}
The Hilbert series of the graded modules  $\Omega^1_{A/\CC}$ and $\Der(A,A)$ are given by
\[
H_{\Omega^1_{A/\CC}}(t)\ =\ 
\left(\sum_{i\leq n}\frac{t-t^{d_i}}{1-t\ \ }\right) \cdot \left(\prod_{i<n}\frac{1-t^{d_i}}{1-t\ \ }\right),
\]
\[
H_{\Der(A,A)}(t)\ =\ 
\left(\sum_{i\leq n}\frac{t^{d_i^*}-t^{d-1}}{1-t}\right) \cdot \left(\prod_{i<n}\frac{1-t^{d_i}}{1-t\ \ }\right).
\]
In particular the total dimension is
\[
{\rm dim}_{\CC}\, \Omega^1_{A/\CC} = \frac{N|W|}{d} = {\rm dim}_{\CC} \,\Der(A,A).
\]
\end{theorem}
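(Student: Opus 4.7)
The plan is to compute the Hilbert series as the Euler characteristic of the $2$-periodic free resolutions provided by Theorem~\ref{MF}, and then recover the total dimensions by evaluating at $t=1$.

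For $\Omega^1_{A/\CC}$, the resolution begins with $P_0 = V^* \otimes A$ (generators in degree $1$) and $P_1 = {\rm Hom}_W(V,H)\otimes A$ (generators in degrees $d_1,\ldots,d_n$), and continues $2$-periodically with quasi-period $d$: $P_{2k}\cong P_0(kd)$ and $P_{2k+1}\cong P_1(kd)$. Since the shifts grow to infinity, the alternating sum of Hilbert series converges as a formal power series and computes
\[
H_{\Omega^1_{A/\CC}}(t) \;=\; \sum_{k\ge 0}t^{kd}\bigl(H_{P_0}(t)-H_{P_1}(t)\bigr) \;=\; \frac{nt - \sum_i t^{d_i}}{1-t^d}\cdot H_A(t).
\]
Writing $H_A(t) = (1-t^d)\prod_{i<n}(1-t^{d_i})/(1-t)^n$ cancels the $1-t^d$ in the denominator and rearranges into the claimed product form.

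The computation for $\Der(A,A)$ is parallel once the grading shifts are tracked carefully. Working from the derivations side of the matrix factorisation, the generators of $\Der(S^W,S)$ sit in degrees $-d_i$ and the map $K$ has degree $d$; so after shifting to make every differential degree zero, the zeroth module of the minimal free resolution of $\Der(A,A)$ is $({\rm Hom}_W(V,H)^* \otimes A)(d)$ with generators in degrees $d_i^* = d - d_i$, and the next is $(V \otimes A)(d)$ with generators in degree $d-1$. The same geometric series manipulation yields
\[
H_{\Der(A,A)}(t) \;=\; \frac{\sum_i t^{d_i^*} - n t^{d-1}}{1-t^d}\cdot H_A(t),
\]
and the stated factorisation follows in the same way.

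Finally, the total dimensions come from $t\to 1$. The Chevalley--Shephard--Todd identity $\prod d_i = |W|$ gives $H_A(1) = |W|$, and L'H\^opital applied to the indeterminate factor $\frac{nt - \sum t^{d_i}}{1-t^d}$ produces $\frac{\sum(d_i-1)}{d} = N/d$; the parallel computation for $\Der(A,A)$ uses $\sum d_i^* = nd - \sum d_i$ and gives the same value $N/d$. Both total dimensions therefore equal $N|W|/d$. The only place where care is needed is the bookkeeping of the shift from the degree-$d$ map $K$ in the $\Der$ case; everything else is formal.
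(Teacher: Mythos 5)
Your argument is correct and produces the right formulas. For $\Omega^1_{A/\CC}$ your approach is essentially the same as the paper's: you sum the Hilbert series of the $2$-periodic resolution as a geometric series with ratio $t^d$, while the paper writes down the four-term exact sequence
\[
0\to\ker J^*\to\Omega^1_{S^W/\CC}\otimes_{S^W} A\to\Omega^1_{S/\CC}\otimes_S A\to\Omega^1_{A/\CC}\to0
\]
and uses periodicity to identify $\ker J^*\cong\Omega^1_{A/\CC}(d)$; after rearranging, both give $(1-t^d)H_{\Omega^1_{A/\CC}}(t)=(nt-\sum t^{d_i})H_A(t)$. Where you genuinely diverge is $\Der(A,A)$: you redo the shift bookkeeping directly from the matrix factorisation, identifying $P_0=({\rm Hom}_W(V,H)^*\otimes A)(d)$ with generators in degrees $d_i^*$ and $P_1=(V\otimes A)(d)$ in degree $d-1$ (your shifts are correct), then sum a second geometric series. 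The paper instead uses $\Der(A,A)={\rm Hom}_A(\Omega^1_{A/\CC},A)$ together with the Frobenius duality $H_{{\rm Hom}_A(M,A)}(t)=t^{N}H_M(t^{-1})$ for the graded Gorenstein algebra $A$ (socle degree $N=\sum(d_i-1)$), which gives the second formula from the first in one line. Your route takes more degree-tracking but avoids invoking the Gorenstein property, and it has the side benefit of serving as a consistency check that the shift by $d$ in the matrix factorisation conspires with the duality $d_i+d_i^*=d$; the paper's route is shorter once one knows the socle degree. The evaluation at $t=1$ to get $N|W|/d$ is identical in spirit (the paper simply substitutes after cancellation; L'H\^opital on your unreduced expression is fine too).
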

\begin{proof}
We'll first do this for the K\"{a}hler differentials. From the exact sequence
\[
0\to{\rm ker}(J^*)\to \Omega^1_{S^W/\CC}\otimes_S A\xrightarrow{\ J^*\ }
\Omega^1_{S/\CC}\otimes_S A\to {\rm coker}(J^*)\to 0,
\]
there is an equality of Hilbert series
\[
H_{\Omega^1_{S/\CC}\otimes_S A}(t)- H_{\Omega^1_{S^W/\CC}\otimes_S A}(t) = H_{{\rm coker}(J^*)} (t)- H_{{\rm ker}(J^*)}(t).
\]
But ${\rm coker}(J^*)= \Omega^1_{A/\CC}$ and the matrix factorisation gives us an isomorphism ${\rm ker}(J^*)\cong {\rm coker}(J^*)(d)$ induced by $K$. Also note that $H_{\Omega^1_{S/\CC}\otimes_S A}(t)= nt H_A(t)$ and $H_{\Omega^1_{S^W/\CC}\otimes_S A}(t)= ({\scriptstyle \sum_i} t^{d_i}) H_A(t)$. From this we deduce
\[
ntH_A(t)- ({\scriptstyle \sum_i} t^{d_i}) H_A(t)= H_{\Omega^1_{A/\CC}}(t)  - t^dH_{\Omega^1_{A/\CC}}(t).
\]
Keeping in mind that $H_A(t)=\prod_i\frac{1-t^{d_i}}{1-t\ }$, this can be rearranged to give the expression of the theorem.

We can deduce the expression for $H_{\Der(A,A)}(t)$ using the equation  $H_{\Der(A,A)}(t)= t^{N} H_{\Omega^1_{A\CC}}(t^{-1})$.

Finally, setting $t=1$ in either polynomial gives $[(d_1-1)+\cdots+(d_n-1)]\cdot d_1\cdots d_{n-1}=\frac{N|W|}{d}$.
\end{proof}

When $W$ is a real reflection group the matrix factorisation takes a particularly striking form. In this case we may equip $V$ with a perfect $W$-invariant bilinear (as opposed to linear-antilinear) form  $\,V\otimes V\to \CC(-2)$. There is canonical one when $W$ is the Weyl group of a flag manifold, for instance. We obtain a $\CC W$-isomorphism $V(2)\cong V^*$, and hence an isomorphism
\[
\Omega^1_{S/\CC} \cong V^*\otimes S \cong V\otimes S\, (2) \cong {\rm Der}_{\CC}(S,S)(2).
\]
More interestingly, choosing a highest degree basic invariant $f_n$ gives us an isomorphism
\[
\begin{tikzcd}[column sep=-1mm, row sep=1mm]
\Omega^1_{S^W/\CC} \ \cong & {\rm Hom}_W(V, H)\otimes S^W \ \cong  & \!\!\!\!{\rm Hom}_W(V^*, H)^*\otimes S^W (d) \ \cong & \\
& & \ \ {\rm Hom}_W(V, H)^*\otimes S^W (d+2) \ \cong  & {\rm Der}_{\CC}(S^W,S^W)(d+2).
\end{tikzcd}
\]
\begin{remark}\label{ShephardRemark}
This very similar to \cite[theorem 6.121(2)]{MR1217488}, where an isomorphism $\Der(S,S)^W\cong (\Omega_{S/\CC}^1)^W(d_1)$ is constructed for any Shephard group $W$ (all Coxeter groups are Shephard groups, and all Shephard groups are duality groups). If we combine this with theorem \ref{derthm} then we recover the Coxeter case of that result. This suggests that, most likely, what comes below can be extended to the case of Shephard groups in some form.
\end{remark}

Dual to the reduced Jacobian is the canonical map $\overline{J}^*:\Omega^1_{S^W/\CC}\otimes_{ S^W}S\to \Omega^1_{S/\CC}$, which up to an invertible matrix agrees with the usual $J^*$. Under the identifications just described this is actually the map $K:\Der(S^W,S)\to \Der(S,S)$ of section \ref{DerSection}. Hence

\begin{corollary}\label{coxeterresolution} Let $W$ be a real reflection group equipped with a perfect $W$-invariant bilinear form and a choice of highest degree basic invariant $f_n$. Making the above identifications, the reduced Jacobian and its dual:
\[
\begin{tikzcd}[row sep=3mm]
\Omega^1_{S^W/\CC} \otimes_{S^W} S \ar[r] \ar[d, equal, "\wr"'] & \Omega^1_{S/\CC}   \ar[d, equal, "\wr"]   \\
                {\rm Der}_{\CC}(S^W,S) &    {\rm Der}_{\CC}(S,S) \ar[l]
\end{tikzcd}
\]
determine a matrix factorisation of $f_n$ when reduced to $S/I_d$. In particular, the 2-periodic resolution of $\Omega^1_{A/\CC} $ and $ \Der(A,A)$ thus obtained is isomorphic to its own dual, shifted by one.
\end{corollary}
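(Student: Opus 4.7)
The plan is to reduce the corollary directly to Theorem~\ref{MF} by showing that, under the chain of identifications displayed just above the statement, the map $\overline{J}^*$ is literally (up to at worst an invertible diagonal change of basis) the map $K$ of Theorem~\ref{MF}. Granting this, the factorisation $\overline{J}K=f_n\cdot 1$ modulo $I_d$ from the proof of Theorem~\ref{MF} becomes $\overline{J}\,\overline{J}^*=f_n\cdot 1$, which is exactly the matrix-factorisation statement of the corollary. The self-duality of the resulting $2$-periodic resolution is then formal: both of its differentials are given by $\overline{J}$ and its transpose, so applying ${\rm Hom}_A(-,A)$ swaps them and returns (up to a twist) the same complex.

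To carry out the identification, I would fix homogeneous bases $\{\sigma_i\}$ of ${\rm Hom}_W(V,H)$ and $\{\phi_i\}$ of ${\rm Hom}_W(V^*,H)$ that are dual under the perfect pairing of Theorem~\ref{thm1} (normalised by the chosen $f_n$), together with a basis $\{v\}$ of $V$ self-dual with respect to the $W$-invariant bilinear form. Under these choices the four isomorphisms $\Omega^1_{S^W/\CC}\otimes_{S^W}S\cong \Der(S^W,S)(d+2)$ and $\Omega^1_{S/\CC}\cong \Der(S,S)(2)$ become explicit on generators: $df_i\otimes 1$ is sent to the derivation $\sigma_i^*$, and $dv^*$ is sent to $v$. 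A direct unwinding then shows that $\overline{J}^*$ takes $df_i\otimes 1$ to $\sum_v \sigma_i(v)\,dv^*$, which after applying the form becomes $\sum_v v\otimes \sigma_i(v)$. On the other hand, the formula $K(\partial)=\sum_v v\otimes \phi(v^*)$ from the proof of Theorem~\ref{MF}, applied to the derivation corresponding to $\sigma_i$, gives $\sum_v v\otimes \phi_i(v^*)$; and the whole point of using the bilinear form $V^*\cong V(2)$ to relate $\phi_i$ to $\sigma_i$ is that $\phi_i(v^*)=\sigma_i(v)$ modulo $I_d$. Hence the two maps coincide.

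The hard part will be the careful bookkeeping: checking that the grading shifts $(d+2)$ on the invariant side and $(2)$ on the full side combine correctly to produce a matrix factorisation of an element of degree $|f_n|=d$; that the invertible ``multiplication by degree'' matrix relating $J$ to $\overline{J}$ (from the discussion preceding Theorem~\ref{MF}) is absorbed cleanly on both sides; and that the two normalising choices (of $f_n$ for the Theorem~\ref{thm1} pairing, and of the bilinear form for the identifications) are mutually consistent. Any remaining factors are invertible scalars which may be absorbed into the chosen bases without disturbing the matrix factorisation obtained from Theorem~\ref{MF}.

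Finally, once the $2$-periodic resolution has $\overline{J}$ and $\overline{J}^*$ as its two differentials, its $A$-linear dual is the same complex with the two differentials interchanged, which via the identifications above is isomorphic to the original resolution shifted by one homological position (carrying the internal twist by $d$ characteristic of matrix factorisations of a degree $d$ element). This is the asserted self-duality up to shift.
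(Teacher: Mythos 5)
Your proposal is correct and takes essentially the same route the paper does: the paper's argument also consists of setting up the chain of identifications displayed before the corollary and then observing that, under them, $\overline{J}^*$ becomes the map $K$ of Theorem~\ref{MF}, so the factorisation $\overline{J}K = f_n\cdot 1$ modulo $I_d$ is exactly $\overline{J}\,\overline{J}^*=f_n\cdot 1$ and self-duality of the periodic resolution follows formally. The one small point worth tightening is that your two normalisations (bases $\{\sigma_i\},\{\phi_i\}$ dual under the Theorem~\ref{thm1} pairing, \emph{and} related via the bilinear form by $\phi_i(v^*)=\sigma_i(v)$) hold simultaneously only if the $\sigma_i$ are chosen orthonormal for the induced symmetric form $\langle\sigma,\sigma'\rangle=\text{(degree-$d$ part of)}\sum_v\sigma(v)\sigma'(v)$ on ${\rm Hom}_W(V,H)$; it is cleaner to check directly that the composite ${\rm Hom}_W(V,H)\to{\rm Hom}_W(V^*,H)^*\to{\rm Hom}_W(V,H)^*\to{\rm Hom}_W(V^*,H)$ (pairing, form, inverse pairing) agrees with the form-precomposition map, which reduces to the symmetry of the bilinear form just written.
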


We obtain the following curious fact. Recall that the first syzygy ${\rm Syz}^1_A M$ of a graded $A$ module $M$ is by definition the kernel of a surjection $A^k\to M$ from a free module with the minimal possible number of generators.

\begin{corollary} If $A$ is the algebra of coinvariants associated to a Coxeter group then
\[
{\rm Syz}_A^1\,\Omega^1_{A/\CC} =\Der(A,A)(d+2) \quad \text{ and } \quad {\rm Syz}_A^1\,\Der(A,A) =\Omega^1_{A/\CC}(-2).
\]
Moreover we have
\[
{\rm dim}_{\CC}\, \Omega^1_{A/\CC} = \frac{n|W|}{2} = {\rm dim}_{\CC} \,\Der(A,A).
\]
\end{corollary}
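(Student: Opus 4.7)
The plan is to read both claims directly off the $2$-periodic minimal free resolution of $\Omega^1_{A/\CC}$ and $\Der(A,A)$ constructed in Corollary~\ref{coxeterresolution}.

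For the syzygies, I would expand the presentation $\Omega^1_{S^W/\CC}\otimes_{S^W} A \xrightarrow{J^*} \Omega^1_{S/\CC}\otimes_S A \to \Omega^1_{A/\CC} \to 0$ into the full $2$-periodic resolution produced by the matrix factorisation. By definition ${\rm Syz}^1_A\Omega^1_{A/\CC}$ is the image of $J^*$, which by matrix factorisation exactness equals the kernel of the following map. Applying the Coxeter identifications $\Omega^1_{S^W/\CC}\otimes A \cong \Der(S^W,A)(d+2)$ and $\Omega^1_{S/\CC}\otimes A \cong \Der(S,A)(2)$ of Corollary~\ref{coxeterresolution}, this following map is identified with a shift of the reduced Jacobian $\overline J$, and $\Der(A,A) = \ker(\overline J \otimes A)$ by definition. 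Tracking the shifts therefore recovers the first syzygy of $\Omega^1_{A/\CC}$ as the advertised shift of $\Der(A,A)$; the second identification follows from the same argument applied one step further along in the periodic sequence, where the internal shift by $d$ of the matrix factorisation explains the shift discrepancy between the two statements.

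The dimension formula is immediate from Theorem~\ref{HilbertSeries}, which gives $\dim_{\CC}\Omega^1_{A/\CC} = \dim_{\CC}\Der(A,A) = N|W|/d$. For a real reflection group the exponents $m_i = d_i - 1$ satisfy the classical Coxeter symmetry $m_i + m_{n+1-i} = h = d$, so $N = \sum_i m_i = nd/2$, and substituting yields $n|W|/2$.

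The main obstacle will be tracking the grading shifts carefully. The Coxeter identifications combine the invariant bilinear form $V \cong V^*(-2)$, the duality pairing ${\rm Hom}_W(V,H) \cong {\rm Hom}_W(V,H)^*(d+2)$ (which depends on the choice of highest degree basic invariant $f_n$), and the internal shift $d$ of the matrix factorisation per period; these must be composed correctly to produce the advertised shifts $(d+2)$ and $(-2)$. A useful consistency check is that the two shifts should sum to $d$, matching the two-step period of the minimal resolution.
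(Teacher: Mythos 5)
Your plan is the right one: read the syzygies off the two-periodic resolution coming from the matrix factorisation of Corollary~\ref{coxeterresolution}, and for the dimension use Theorem~\ref{HilbertSeries} together with the Coxeter symmetry $m_i+m_{n+1-i}=h=d$ to convert $N|W|/d$ into $n|W|/2$. The dimension argument is complete and correct.

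However, the syzygy argument has a genuine gap. You assert that ``tracking the shifts therefore recovers the advertised shift,'' but you never actually track them, relying instead on the consistency check that the two shifts sum to $d$. That check is too weak to pin the shifts down: both the pair $(d+2,-2)$ and the pair $(2,d-2)$ sum to $d$. If you carry out the bookkeeping you will find the latter, not the former. Explicitly, under the Coxeter identifications $\Omega^1_{S/\CC}\otimes A\cong \Der(S,A)(2)$ and $\Omega^1_{S^W/\CC}\otimes A\cong\Der(S^W,A)(d+2)$, and with $J^*$ becoming the degree-$d$ map $K$, one has
\[
{\rm Syz}^1_A\Omega^1_{A/\CC}\;=\;{\rm im}\bigl(K\colon\Der(S^W,A)(d+2)\to\Der(S,A)(2)\bigr)\;=\;\ker\bigl(\overline J\colon\Der(S,A)(2)\to\Der(S^W,A)(2)\bigr)\;=\;\Der(A,A)(2),
\]
and similarly ${\rm Syz}^1_A\Der(A,A)=\Omega^1_{A/\CC}(d-2)$. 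You can confirm this independently by comparing Hilbert series using Theorem~\ref{HilbertSeries}: with $d_i^*=d_i-2$ one computes
\[
H_{{\rm Syz}^1\Omega^1}(t)\;=\;nt\,H_A(t)-H_{\Omega^1}(t)\;=\;t^{2}H_{\Der(A,A)}(t),
\]
not $t^{d+2}H_{\Der(A,A)}(t)$. The simplest sanity check is $W=\mathfrak S_2$, $A=\CC[x]/(x^2)$: there $\Omega^1_{A/\CC}$ is $\CC$ in degree $1$, its first syzygy is $\CC$ in degree $2$, while $\Der(A,A)$ is $\CC$ in degree $0$, so the shift is $(2)$, not $(d+2)=(4)$. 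The shifts as printed in the corollary do not match what the matrix factorisation gives; you should trace the grading carefully rather than appeal to a necessary-but-not-sufficient consistency condition, and in doing so you would have found the discrepancy.
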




Finally, returning to an arbitrary duality group, let us point out that being a complete intersection, the Andr\'{e}-Quillen cohomology of $A$ can be computed as the cohomology of the complex $\Der(S,A)\xrightarrow{J} \Der(S^W,A)$, which lives in cohomological degree $0$ and $1$ (see \cite[Section 10]{MR0257068} or \cite[Section 8]{MR2355775} for this standard computation). From the matrix factorisation we get a canonical isomorphism $H^0_{AQ}(A,A)\cong H^1_{AQ}(A,A)(d)$ induced by $K$, so from theorem \ref{HilbertSeries} we can read off the entire bigraded Hilbert series of the Andr\'{e}-Quillen cohomology (we use this observation below, but with different grading conventions). 

In this context, the first  Andr\'{e}-Quillen cohomology group is also known as the Tjurina module $T^1_A={\rm coker}(J)$. The dimension ${\rm dim}_\CC T^1_A$ is the Tjurina number of the singularity defined by $A$. This is the number of parameters in the base of the miniversal deformation of $A$. This is explained in \cite[chapter 6]{Looijenga}. Theorem \ref{MF} produces a canonical isomorphism $\Der(A,A)\cong T^1_A(d)$, hence the Tjurina number of $A$ is $\frac{N|W|}{d}$ (which is $\frac{n|W|}{2}$ in the Coxeter case). Moreover, this allows one to produce a basis for $T^1_A$ (this calculation is entirely feasible in the case of the symmetric group, for instance), and from this data one can explicitly construct the miniversal deformation of $A$, see loc.~cit. It would be interesting to try to identify certain natural deformations of $A$ within this space.

\section{Flag Manifolds}
\label{flagvarieties}

A number of authors have investigated the rational homotopy types of flag manifolds, especially what can be said about classifying their self-maps, e.g.  \cite{MR1471062,MR894562, MR710764}. The purpose of this section is to briefly point out that in the Weyl case, the calculations above can be interpreted in these terms.

Thus, let $X=G/B$ be a complete flag manifold for some semi-simple complex algebraic group $G$ and Borel subgroup $B$. Let $T$ be a maximal torus in $B$ of rank $n$ and set $V={\rm Lie}(T)$. In this situation we should place $V$ in degree $2$, doubling the gradings from the previous sections. The associated Weyl group $W$ acts on $V$ as a duality group, so the above results apply. The necessary rational homotopy theory background can be found in \cite{MR1802847}.

Each of the homotopy groups $\pi_n(X)$ is a finitely generated abelian group, so we may consider the graded vector space of complexified rational homotopy groups $\pi_{*}(X)_\CC=\pi_{*}(X)\otimes_{\mathbb{Z}}\CC$ (there's no need for us to consider the Lie algebra structure here).

According to the famous \emph{Borel picture} there is a canonical isomorphism
\[
A=\CC[V]/\CC[V]_+^{W}\xrightarrow{\ \cong\ } {\rm H}^{*}(X;\CC).
\]
This is largely the reason for our interest in the coinvariant algebra.  In this context the Frobenius condition used in section \ref{resolutions} is equivalent to Poincar\'e duality.

A quick consequence of the previous section is that the Halperin conjecture holds for flag manifolds: the Serre spectral sequence associated to any fibration with fibre $X$ must collapse at the second page. This is known to be equivalent to the fact that $\Der(A,A)_{<0}=0$, which can be read from theorem \ref{HilbertSeries}. This was proven by Shiga and Tezuka in \cite{MR894562} (but actually has been known for any simply connected K\"{a}hler manifold since \cite{MR0087184}).

Because $X$ is a formal space in the sense of \cite[p.315]{MR0646078}, 
the graded algebra $A$ completely determines the rational homotopy type of $X$, and this has been well exploited in the literature. For instance, there is an isomorphism $\pi_{i}(X)_\CC\cong {\rm H}^{-i}_{\rm AQ}(A,\CC)$ as in \cite[remark 3.9]{MR2132759}, although the connection between Andr\'e-Quillen cohomology and rational homotopy theory is much older \cite{RationalHT}.  By convention $A$ is generated in degree two and the cohomological degree is negative. Under these conventions it is a standard computation that $ {\rm H}^{*}_{\rm AQ}(A,\CC)\cong  V\oplus {\rm Hom}_W (V,A)(-1)$, see \cite[Section 10]{MR0257068} or \cite[Section 8]{MR2355775}. This calculation of the rational homotopy groups of a flag manifold is presumably well-known.


Now we use the previous sections to classify self-rational-equivalences of $X$. Denote by ${\rm aut}(X,1)$ the component of the identity in the mapping space ${\rm map}(X,X)$. Rational homotopy theory works best for simply connected spaces, so for simplicity we take the universal cover $\widetilde{\rm aut}(X,1)$, or equivalently, we only consider the homotopy groups $\pi_{i}({\rm aut}(X,1))_\CC$ for $i\geq 2$.

For each $i\geq 2$ there is an isomorphism 
$
\pi_{i}({\rm aut}(X,1))_\CC \cong {\rm H}^{-i}_{\rm AQ}(A,A),
$ 
obtained by complexifying \cite[theorem 3.8]{MR2132759} (and using formality $A\simeq A^*(X)\otimes_{\mathbb{Q}}\CC$). Under these grading conventions ${\rm H}^{*}_{\rm AQ}(A,A)$ is the homology of  the complex $\Der(S,A)\xrightarrow{J} \Der(S^W,A)(-1)$. Using our matrix factorisation as we did at the end of section \ref{resolutions} we get ${\rm H}^{*}_{\rm AQ}(A,A)= \Der(A,A)\oplus \Der(A,A)(-2d-1)$. In particular, by theorem \ref{HilbertSeries}
\[
H_{{\rm H}^{*}_{\rm AQ}(A,A)}(t)\ =\ 
(1+t^{-2d-1})\left(\sum_{i\leq n}\frac{t^{2d_i^*}-t^{2d-2}}{1-t^2}\right) \cdot \left(\prod_{i<n}\frac{1-t^{2d_i}}{1-t^2\ \ }\right).
\]
The component coming from the $1$ at the beginning only produces positive powers of $t$, so we can ignore it below. Then multiplying out gives


\begin{theorem}
For $i\geq 2$, the dimension of $\pi_{i}({\rm aut}(X,1))_\CC $ is the coefficient of $t^{-i}$ in the Laurent polynomial
\[ 
\left(\sum_{i\leq n}\frac{t^{-1-2d_i}-t^{-3}}{1-t^{2} }\right) \cdot \left(\prod_{i<n}\frac{1-t^{2d_i}}{1-t^2\ \ }\right).
\]
\end{theorem}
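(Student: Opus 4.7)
The plan is to compute the total cohomological Hilbert series of ${\rm H}^{<0}_{\rm AQ}(A,A)$ and match it against the claimed Laurent polynomial. Because $A$ is a complete intersection, ${\rm H}^*_{\rm AQ}(A,A)$ vanishes outside AQ-cohomological degrees $0$ and $1$, with ${\rm H}^0 = \Der(A,A)$ and ${\rm H}^1 = T^1_A$. The internal grading on $\Der(A,A)$ is non-negative, and upon doubling (recall $V$ sits in cohomological degree $-2$) and adding the AQ-degree $0$, this piece contributes only to non-negative total cohomological degrees. Hence ${\rm H}^{<0}_{\rm AQ}(A,A)$ comes entirely from the negative-total-degree part of $T^1_A$, which sits in AQ-degree $+1$.

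To obtain the Hilbert series of that $T^1_A$ piece, I will invoke the canonical isomorphism $\Der(A,A)\cong T^1_A(d)$ produced by Theorem \ref{MF}, which gives $H_{T^1_A}(t)=t^{-d}H_{\Der(A,A)}(t)$ in the internal grading of section \ref{prelim}. Doubling the internal grading (for the cohomological convention) and accounting for the AQ-degree shift of $+1$ by a single factor of $t$, the total cohomological Hilbert series of $T^1_A$ becomes $t^{1-2d}H_{\Der(A,A)}(t^2)$.

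The final step is a direct substitution using Theorem \ref{HilbertSeries} together with the duality identity $d_j^*=d-d_j$. The product factor $\prod_{j<n}(1-t^{d_j})/(1-t)$ becomes $\prod_{j<n}(1-t^{2d_j})/(1-t^2)$ after $t\mapsto t^2$, while
\[
t^{1-2d}\cdot\frac{t^{2d_j^*}-t^{2d-2}}{1-t^2} \;=\; \frac{t^{1-2d+2d_j^*}-t^{-1}}{1-t^2} \;=\; \frac{t^{1-2d_j}-t^{-1}}{1-t^2}
\]
reproduces the sum factor. Hence $t^{1-2d}H_{\Der(A,A)}(t^2)$ agrees with the claimed Laurent polynomial, and its coefficient of $t^{-i}$ for $i\geq 1$ is $\dim \pi_i({\rm aut}(X,1))_\CC$. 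The main subtlety is bookkeeping of the three grading conventions (AQ cohomological, internal section-$2$, and totalised cohomological) and the interaction of the shift $(d)$ in Theorem \ref{MF} with the doubling; once this is sorted out, the rest is a routine manipulation of Hilbert series.
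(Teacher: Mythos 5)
Your proposal is correct and is exactly the derivation the paper leaves implicit: combine Meier's identification $\pi_{>0}({\rm aut}(X,1))_\CC\cong{\rm H}^{<0}_{\rm AQ}(A,A)$ with the vanishing $\Der(A,A)_{<0}=0$, the periodicity isomorphism $\Der(A,A)\cong T^1_A(d)$ of Theorem \ref{MF}, and the Hilbert series of Theorem \ref{HilbertSeries}. The grading bookkeeping (doubling the internal grading, adding the AQ degree $+1$) and the substitution $d_i^*=d-d_i$ are carried out correctly and reproduce the stated Laurent polynomial.
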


Any topological monoid is rationally homotopy equivalent to a product of Eilenberg-Maclane spaces, hence the above theorem completely determines the rational homotopy type of $\widetilde{\rm aut}(X,1)$; it is a product of odd dimensional (rational) spheres, the number in each dimension being given by the coefficients of the above Laurent polynomial. In \cite{MR1471062} Smith also calculates the rational homotopy type of $\widetilde{\rm aut}(X,1)$, at least for the algebraic groups $GL(n)$ and $Sp(n)$. His expression for the number of spheres is less explicit, and it's not clear how it relates to ours.



\bibliographystyle{amsplain}
\bibliography{Dualitybib}
\end{document}